\numberwithin{equation}{section}
\theoremstyle{plain}
\newtheorem{thm}{Theorem}[section]
\newtheorem{lem}[thm]{Lemma}
\theoremstyle{definition}
\newtheorem*{defn*}{Definition}
\newtheorem{rem}[thm]{Remark}
\newcommand{\N}{\mathbb{N}}
\newcommand{\R}{\mathbb{R}}
\newcommand{\cF}{\mathcal{F}}
\newcommand{\cO}{\mathcal{O}}
\newcommand{\cS}{\mathcal{S}}
\newcommand{\Aut}{\operatorname{Aut}}
\begin{document}

\title{Maximal amenable MASAs of radial type \\ in the free group factors }

\author{R\'emi Boutonnet}
\address{Institut de Math\'ematiques de Bordeaux \\ CNRS \\ Universit\'e Bordeaux I \\ 33405 Talence \\ FRANCE}
\email{remi.boutonnet@math.u-bordeaux.fr}
\thanks{RB Supported by ANR grant AODynG 19-CE40-0008} 

\author{Sorin Popa}
\address{Math Dept, UCLA, Los Angeles CA 90095-1555, USA}
\email{popa@math.ucla.edu} 
\thanks{SP Supported by NSF Grant DMS-1955812  and the Takesaki Endowed Chair at UCLA}

\maketitle

\begin{abstract} 
We prove that if  $\{(M_j, \tau_j)\}_{j\in J}$ are tracial von Neumann algebras,  $s_j \in M_j$ are selfadjoint semicircular elements 
and $t=(t_j)_j$ is a square summable $J$-tuple of real numbers with 
at least two non-zero entries, then the von Neumann algebra $A(t)$ generated by the ``weighted radial element'' $\sum_j t_j  s_j\in M:=*_{j\in J} M_j$ 
is maximal amenable in $M$, with 
$A(t)$, $A(t')$  unitary conjugate in $M$ iff $t, t'$ are proportional. 
Letting $M_j$ be diffuse amenable,  $\forall j$, this provides  a large family of maximal amenable MASAs in  the free group factor $L\mathbb F_n$. 
\end{abstract}

\section{Introduction}

While the isomorphism problem for the free group factors $L\mathbb F_n$, $2\leq n \leq \infty$, 
remains unsolved, a series of absorption, in-decomposability and randomness phenomena in $L\mathbb F_n$, and more generally 
in amalgamated free product II$_1$ factors $M=M_1*_B M_2$, unravelled over the years. 

The simplest type of absorption result shows that  whenever a subalgebra $Q$ of $M_1$ ``stays away'' from $B$, $Q\nprec_{M_1} B$, 
any element in $M=M_1*_B M_2$ that intertwines  $Q$ into $M_1$ 
must be contained in $M_1$ (\cite{Po81} in the case $B=\Bbb C$,   \cite{IPP05} in general). While elementary, this method 
did for instance allow to deduce in-decomposability, such as primeness and absence of Cartan subalgebras, in 
free group factors $L\mathbb F_J$, with uncountable set $J$ of generators (see \cite{Po81}). 

Another type of absorption, triggered by the discovery in \cite{Po82} that a diffuse amenable 
subalgebra $Q\subset M$  that's freely complemented in $M$ is maximal amenable in $M$, has been much studied over the years. 
A general such amenable absorption result in \cite{BH16} shows that if $Q\subset M_1$ is maximal amenable in $M_1$ 
and $Q\nprec_{M_1} B$, then $Q$ is maximal amenable in $M=M_1*_B M_2$ (see also \cite{HJNS19} for similar such results obtained via free entropy methods). 

A random matrix approach to in-decomposability was initiated in \cite{V94}, based on the discovery that 
 ``thinness over MASA'' assumptions on $L\mathbb F_n$ come in contradiction with 
randomness, quantified by the free entropy dimension. 
This allowed proving that $L\mathbb F_n$ have no Cartan subalgebras \cite{V94}, primeness \cite{G96},  
and more generally absence of ``thinness'' with respect to diffuse AFD-subalgebras in $L\mathbb F_n$ \cite{GP96}. 

Deformation-rigidity and boundary methods have been used to establish even  stronger absorption results in $L\mathbb F_n$,  
more generally in amalgamated free product factors (see e.g., \cite{P01,O03,IPP05,Po06,OP07,PV11,I13,DKP22,Dr22}), 
showing for instance that the normalizer of any diffuse amenable subalgebra 
of $L\mathbb F_n$ generate an amenable von Neumann algebra \cite{OP07}. 

Motivated by some of these results, as well as by their work in $L^2$-cohomology of groups, 
Peterson-Thom made in \cite{PT07} the far reaching conjecture that if $Q_i\subset L\mathbb F_n$, $2\leq n \leq \infty$, 
is a family of amenable von Neumann subalgebras with $\cap_i Q_i$ diffuse, 
then $\vee_i Q_i$ is amenable. Equivalently, any maximal amenable $Q\subset L\mathbb F_n$  ``absorbs''  any other 
amenable $Q_0\subset L\mathbb F_n$ that has diffuse intersection with $Q$,   
a phenomenon superseding all above absorption statements. 

This conjecture, and even stronger ones formulated later (in \cite{H15,P18}), is now settled in the affirmative due 
to the combination of two results: on the one hand, Hayes used 1-bounded entropy \cite{J07} to prove that if one could establish the strong convergence of tuples of tensor products of gaussian unitary ensemble (GUE) random matrices, see \cite{H20},
then the PT-conjecture holds true; on the other hand, Belinschi-Capitaine  \cite{BC22} and Bordenave-Collins \cite{BoCo23} 
have recently proved this strong convergence result.  

Note that by \cite{Dy93} if $Q$ is a diffuse amenable tracial von Neumann algebra and $N$ is a tracial von Neumann algebra 
that's either a free group factor $L\mathbb F_n$, $2\leq n \leq \infty$, or a diffuse amenable tracial von Neumann algebra, then $M=Q*N$ 
is isomorphic to the free group factor $L\mathbb F_{m}$, where $m=2$ when $N$ is amenable, and $m=n+1$ when $N=L\mathbb F_n$. 
So by \cite{Po82}, in all these cases $Q$ follows maximal amenable in $M=Q*N$, with the 
proof actually showing that any amenable von Neumann subalgebra of $M$ that has diffuse intersection with $Q$ 
must be contained in $Q$. Thus, whenever an amenable diffuse 
von Neumann subalgebra $Q\subset M$ 
is freely complemented, it does satisfy the PT-absorption conjecture by \cite{Po82}.  

While there are by now other examples of maximal amenable von Neumann 
subalgebras in $L\mathbb F_n$, 
one could not establish whether they are freely complemented or not. This fact is amply discussed in   
\cite[Section 5]{P18}. But a resolution of the PT-conjecture 
makes it now particularly  compelling to answer the question: are there actually any maximal amenable von Neumann subalgebras of $L\mathbb F_n$ 
that are not freely complemented ? It would of course be a very  striking structural property of $L\mathbb F_n$ 
if  all of its maximal amenable von Neumann subalgebras are freely complemented.   
But this is a possibility that should not be ruled out. 

The so-called {\it radial MASA} $L_n$ in $L\mathbb F_n$, $2\leq n <\infty$,  
defined as the abelian von Neumann algebra generated by the ``radial element'' 
$\sum_{i=1}^n (u_i+u_i^*)$, where $u_1, ..., u_n$ are the canonical unitaries corresponding to the free generators of $L\mathbb F_n$, 
introduced in (\cite{Py81}) and shown in \cite{CFRW09} to be maximal amenable in $L\mathbb F_n$,  
is a good candidate for an example of a non-freely complemented maximal amenable subalgebra in the case $n<\infty$ 
(see \cite[Question 1.1]{CFRW09} and \cite[Question 5.5]{P18}; see also Remark 3.5.$1^\circ$ in \cite{Po82}). 

Our work here is motivated by an effort to produce examples of non-freely complemented maximal amenable von Neumann 
subalgebras in $L\mathbb F_n$. While we could not solve this problem, in this short note 
we produce a large family of distinct maximal amenable MASAs in $M=L\mathbb F_n$, and more generally in free products of factors $M=*_j M_j$. 
Our examples are ``radial-type'' MASAs, but they actually do not recover the ``classic'' radial MASA 
$L_n\subset L\mathbb F_n$. However, various considerations make them good candidates 
for not being freely complemented. 

To state our result we use Voiculescu's notion of {\it semicircular} elements in tracial 
von Neumann algebras and his {\it free Gaussian formalism}, allowing $\ell^2$-summations of freely independent such elements (see \cite{V88}).

\begin{thm}\label{MAMS}
Let $\{(M_j, \tau_j)\}_{j\in J}$ be a family of tracial von Neumann algebras, with a self-adjoint semi-circular element $s_j$ in each $M_j$, $j\in J$. 
Denote by $\ell^2_*$ the set of square summable families of real numbers having at least two non-zero entries. For each $t=(t_j)_j\in \ell^2_{*}$ 
denote by $A(t)$ the abelian von Neumann  generated in $M=*_{j\in J} M_j$ by $s(t):= \sum_j t_j s_j\in M$. Then $A(t)$ is maximal amenable in $M$, $\forall t\in \ell^2_{*}$, 
with $A(t)\prec_M A(t')$ if and only if $t, t' \in \ell^2_{*}$ are proportional. 
\end{thm}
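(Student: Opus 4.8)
The plan is to establish the maximal amenability of $A(t)$ via the Popa asymptotic-orthogonality-type machinery combined with the absorption technology of \cite{BH16}, and to obtain the conjugacy statement by an $L^2$-spectral argument. First I would set up the right ambient picture: using Voiculescu's free Gaussian formalism, the element $s(t)=\sum_j t_j s_j$ is itself a semicircular element (being an $\ell^2$-combination of free semicirculars, its law is semicircular with variance $\|t\|_2^2$), so $A(t)$ is a diffuse abelian von Neumann algebra, hence a candidate MASA. The key structural observation is that $M=*_{j\in J}M_j$ can be rewritten as a free product $N_1 * N_2$ in which $A(t)$ sits \emph{maximal amenable inside one free factor}: concretely, one finds a decomposition where $s(t)$ together with a ``complementary'' semicircular family generates a copy of $L\mathbb F_2$ (or $L\mathbb F_\infty$), inside which $A(t)$ is the radial-type MASA generated by a single semicircular. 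Here one uses that $t$ has at least two nonzero entries — this is what makes $s(t)$ freely complemented \emph{within a proper subfactor} rather than within all of $M$, and it is exactly the setting of the Peterson–Thom-type absorption coming from \cite{P82}: a diffuse amenable freely complemented subalgebra is maximal amenable, and in fact absorbs any amenable subalgebra with diffuse intersection.

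Concretely, fix two indices with $t_{j_0},t_{j_1}\neq 0$. Inside $M_{j_0}*M_{j_1}$ one can produce, by a rotation in the semicircular family $\{s_{j_0},s_{j_1}\}$, a new free semicircular pair $\{s(t_{j_0},t_{j_1})/\|(t_{j_0},t_{j_1})\|, \, s'\}$ together with extra free generators accounting for the rest of $M_{j_0}$ and $M_{j_1}$; absorbing the remaining $s_j$, $j\neq j_0,j_1$, into the ``complement'' one writes $M = A(t)'' * N$ where $A(t)''$ is the diffuse abelian algebra generated by $s(t)$ and $N$ contains everything else as a free complement. Then \cite{P82} (in the strong form recalled in the introduction) gives that $A(t)$ is maximal amenable in $M$ and moreover absorbs amenable subalgebras with diffuse intersection; in particular $A(t)$ is a MASA. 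I expect the main obstacle to be making this free-complementation rigorous in the $\ell^2$-infinite case: one must check that the ``complement'' of $s(t)$ in the closed span of $\{s_j\}_j$, namely the orthogonal complement of $t$ in $\ell^2(J,\R)$, indeed generates, together with the off-diagonal parts of the $M_j$'s, a subalgebra freely independent from $A(t)$ — this is a Hilbert-space orthogonality computation in the Fock model plus freeness of the $M_j$'s, but it requires care when $J$ is infinite and when the $M_j$ are not themselves generated by $s_j$.

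For the conjugacy dichotomy, first note the easy direction: if $t'=\lambda t$ then $s(t')=\lambda s(t)$ generates the same algebra, so $A(t)=A(t')$ and a fortiori $A(t)\prec_M A(t')$. For the converse, suppose $A(t)\prec_M A(t')$. Since $A(t)$ is a MASA that is maximal amenable, and $A(t')$ is amenable, the intertwining forces (by the absorption property and a standard argument) that a corner of $A(t)$ is unitarily conjugate into $A(t')$, and then — both being MASAs — that $A(t)$ and $A(t')$ are unitarily conjugate in $M$, say $u A(t) u^* = A(t')$. Conjugation sends the semicircular generator $s(t)$ to an element $u s(t) u^* \in A(t')$ which is again semicircular with the same variance $\|t\|_2^2$; on the other hand $A(t')$ is generated by $s(t')$ which is semicircular with variance $\|t'\|_2^2$. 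Comparing the trace of $s(t)^2 = \sum_j t_j^2 \tau(s_j^2) = \sum_j t_j^2$ (using $\tau(s_j^2)=1$ and freeness killing cross terms) with that of $s(t')^2 = \sum_j (t'_j)^2$, together with the fact that any trace-preserving automorphism of a diffuse abelian algebra identifying two semicircular generators must scale them, one extracts that $t'_j = \pm t_j$ after rescaling — but more carefully, matching the full joint distribution of $s(t)$ with the individual $s_j$ (via the conditional expectations $E_{M_j}(s(t)) = t_j s_j$, which are intertwined by $u$) pins down $t$ and $t'$ up to a single global scalar, i.e. they are proportional. I expect this last step to require a short but delicate argument ensuring the scalar is uniform across all coordinates $j$, which follows by testing the intertwining against each free factor $M_j$ separately.
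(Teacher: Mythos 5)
There are two genuine gaps, and both concern the central difficulties of the theorem. First, your main structural claim --- that one can write $M = A(t)''*N$ with $A(t)$ freely complemented in all of $M$ --- is unjustified and almost certainly false in general; indeed the whole motivation of the paper is that these $A(t)$ are candidates for \emph{non}-freely-complemented maximal amenable MASAs. The rotation in the semicircular family $\{s_{j_0},s_{j_1}\}$ that diagonalizes $s(t)$ is an orthogonal transformation of the real Hilbert space underlying the free Gaussian functor, so it induces an automorphism of $N=\{s_j: j\in J\}''\simeq L\mathbb F_{|J|}$ only; it does not extend to $M_{j_0}*M_{j_1}$ when $M_j\supsetneq \{s_j\}''$, and there is no ``complementary'' family of free generators accounting for the rest of the $M_j$'s. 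The step you flag as ``the main obstacle'' is not a technical verification but the actual content of the theorem. The paper's route is: $A(t)$ is freely complemented \emph{inside $N$} (hence maximal amenable in $N$ by \cite{P82}), and then maximal amenability is lifted from $N$ to $M$ through the chain $N_i\subset N_{i+1}=N_i*_{A(s_{i+1})}M_{i+1}$ using the amenable absorption theorem of \cite{BH16} for amalgamated free products. That lifting requires as input the non-intertwining conditions $A(t)\nprec_{N_i}A(s_{i+1})$, which are propagated by the \cite{AJS05} control of intertwiners (Lemma \ref{going up}); you cite \cite{BH16} in your opening sentence but never actually invoke it, and without it the argument does not reach $M$.

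Second, the converse direction of the dichotomy ($A(t)\prec_M A(t')$ implies proportionality) cannot be extracted from variances or conditional expectations as you propose. If $uA(t)u^*=A(t')$, then $us(t)u^*$ is merely \emph{some} generator of $A(t')$ of semicircular law; but a diffuse abelian algebra generated by one semicircular element contains semicircular elements of every variance, so comparing $\|t\|_2^2$ with $\|t'\|_2^2$ yields nothing. Likewise $u$ does not commute with $E_{M_j}$, so the identities $E_{M_j}(s(t))=t_js_j$ are not ``intertwined by $u$.'' The paper's actual argument (Lemma \ref{downstairs}) is of a different nature: using functoriality of the free Gaussian construction under orthogonal transformations and a gluing-of-intertwiners trick, one shows that if some non-proportional pair $(\xi,\eta)$ admits an intertwiner then so does a pair at twice the angle, and iterating produces an intertwiner between $A(\xi)$ and $A(\eta)$ with $\xi\perp\eta$ --- which is impossible by \cite{P81} since orthogonal semicirculars are free. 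This angle-doubling mechanism, together with the lift of non-intertwining from $N$ to $M$ via Lemma \ref{going up}, is what your proposal is missing.
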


The subordination relation $Q\prec_M P$ for two von Neumann subalgebras $Q, P$ in a tracial von Neumann algebra $M$ is in the sense 
of (Section 2 in \cite{Po03}) and it reads $Q$ {\it can be intertwined into $P$ inside $M$}. 
Of the several equivalent characterizations provided in (2.1 of \cite{Po03}), we will take as definition the one requiring 
the existence of  a non-zero $\xi\in L^2M$ (an {\it intertwiner}) such that the $Q-P$ Hilbert bimodule $\mathcal H=\overline{\text{\rm sp} Q \xi P}$ is finite 
as a right $P$-module, i.e., ${P^{op}}' \cap \mathcal B(\mathcal H)$ is a finite von Neumann algebra. Note that  by multiplying such 
$\xi$ from the right with an appropriate central projection of $P$, this is equivalent 
to the existence of $0\neq \xi' \in L^2M$ such that $\mathcal H'=\overline{\text{\rm sp} Q \xi' P}$ is finitely generated as right $P$-module 
(see also Lemma 2.2 in \cite{Po03}). 

Note that in case $J$ is (at most) 
countable in the above theorem, with $M_j$ either separable amenable or $\simeq L\mathbb F_{n(j)}$, $\forall j\in J$, the resulting $M=*_j M_j$ 
follows of the form $L\mathbb F_{n(J)}$, for some $2\leq n(J) \leq \infty$, by \cite{Dy93}, 
thus making $A(t)\subset M$, $t\in \ell^2_*,$ a large class of examples of maximal amenable MASAs in free group factors. 
Taking $M_j$ ``much larger'' than the abelian algebra $A_j=\{s_j\}''$, generated by the semicircular element $s_j\in M_j$, seems to indicate that $A(t)$ cannot be freely complemented in $M$. For instance, taking $M_j = A_j \rtimes \Gamma_j$, for some trace preserving action of an amenable group $\Gamma_j \curvearrowright A_j$. This heuristic seems particularly pertinent when taking $M_j=A$, $\forall j$, with $A$ abelian non-separable (e.g., an ultrapower of $L^\infty[0,1]$), 
where the statement could perhaps be used towards an existence result in $L\mathbb F_n$, for $n$ finite 
(see the remarks at the end of this paper for more comments along these lines). 

For the proof of the theorem we will use  the absorption results in \cite{Po81, IPP05,BH16} and the trick for ``glueing'' intertwiners from (page 398 in \cite{Po03}). 

First of all, note that we may assume $J$ is at most countable. Indeed, by  (Remark 6.3 and Corollary 4.3  and its proof in \cite{Po81}; 
see also  Corolary 4.3 and its proof in \cite{Po90}), any intertwiner between some $A(t), A(t')$, $t, t'\in \ell^2_*$, lies in the von Neumann algebra generated by $M_j$ with $j$ in the supports of the $J$-tuples $t=(t_j)_j, t'=(t'_j)_j$, which are countable (see  
Theorem 1.1 in \cite{IPP05} for a more general result about controlling intertwiners in amalgamated free product factors, which will become indispensable shortly). 
Also, if $A(t)$ would  be contained in some amenable $B\subset M$ and $b\in B$ is an arbitrary element, then there exists  a countable $J_0\subset J$ that contains the support of $t$, with $b\in M_{J_0}:=*_{j\in J_0} M_j$. Thus, if $A(t)$ is maximal amenable in $M_{J_0}$, then $b\in A(t)$. This shows that if we can show that $A(t)$ is maximal amenable in any $M_{J_0}$ with $J_0\subset J$ countable, then the result follows for all  $J$, thus reducing the theorem to the case $J$ at most countable. 

The rest of the argument makes crucial use of the von Neumann algebra $N \subset M$ generated by the semicircular elements, $N:=\{s_j \mid j \in J \}''$, 
which we describe via the free Gaussian functor \cite{V88}. 
Thus, letting $H=H(J)$ denote the $|J|$-dimensional real Hilbert space, we alternatively view $N$ as the 
von Neumann algebra generated by $s(\xi)$, $\xi \in H$, on $\cF(H)$, where $\cF(H)$ is the full Fock space of $H$ and $s(\xi) \in N$ 
is the semi-circular operator associated to the unit vector $\xi$ in $H$.  

Note that $N\simeq L\Bbb F_{|J|}$ and that $N$ contains all the abelian von Neumann algebras $A(t), t\in \ell^2_{*}$, with $\ell^2_{*}$ 
identifying naturally with the subset of vectors having at least two non-zero coordinates in the $J$-dimensional real Hilbert space 
$H=H(J)$. We view $A(t), t\in \ell^2_{*}$ as part of the larger family of 
abelian von Neumann subalgebras $A(\xi) = \{s(\xi)\}'' \subset N$, $\xi \in H$. 

Given a non-zero vector $\xi \in H$, if we denote by $H_0 \subset H$ the orthogonal complement of $\xi$, and by $N_0$ the von Neumann algebra associated with $H_0$ by the free Gaussian functor, we see that the inclusion $N$ splits as the free product $A(\xi) \subset A(\xi) *N_0$. So $A(\xi)$ is freely complemented in $N$ and is thus maximal amenable inside $N$ by \cite{Po82}.

\begin{lem}\label{downstairs}
Given unit vectors $\xi_1, \xi_2 \in H$, we have $A(\xi_1) \prec_N A(\xi_2)$ iff $A(\xi_1)=A(\xi_2)$ iff $\xi_1 = \pm \xi_2$. 
\end{lem}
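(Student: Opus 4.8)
The statement is a chain of equivalences; two of its implications are immediate and one is the real content. If $\xi_1=\pm\xi_2$ then $s(\xi_1)=\pm s(\xi_2)$, so $A(\xi_1)=A(\xi_2)$ and trivially $A(\xi_1)\prec_N A(\xi_2)$. So it suffices to prove (i) $A(\xi_1)=A(\xi_2)\Rightarrow \xi_1=\pm\xi_2$ and (ii) $A(\xi_1)\prec_N A(\xi_2)\Rightarrow \xi_1=\pm\xi_2$. For (i) I would work on the Fock space $\cF(H)$ with vacuum vector $\Omega$: since $\Omega$ is a cyclic trace vector for each $A(\xi_i)=\{s(\xi_i)\}''$ and an easy induction gives $s(\xi_1)^n\Omega\in\bigoplus_{k=0}^n\C\,\xi_1^{\otimes k}$ for all $n$, we get $A(\xi_1)\Omega\subseteq\overline{\bigoplus_{k\geq0}\C\,\xi_1^{\otimes k}}$. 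If $A(\xi_1)=A(\xi_2)$ then $\xi_2=s(\xi_2)\Omega\in A(\xi_1)\Omega$; intersecting with the degree-one part of $\cF(H)$ forces $\xi_2\in\C\xi_1$, hence $\xi_2=\pm\xi_1$ since both are real unit vectors. Combined with the first sentence this already yields $A(\xi_1)=A(\xi_2)\Leftrightarrow\xi_1=\pm\xi_2$.

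For (ii), the plan is to propagate the intertwining relation using the orthogonal symmetry of the free Gaussian functor \cite{V88}. For $R\in O(H)$ let $\alpha_R\in\Aut(N)$ be the trace-preserving automorphism with $\alpha_R(s(\xi))=s(R\xi)$, so $\alpha_R(A(\xi))=A(R\xi)$. Since any two pairs of unit vectors with the same inner product lie in a single $O(H)$-orbit, $A(\xi_1)\prec_N A(\xi_2)$ gives $A(\xi)\prec_N A(\xi')$ for \emph{all} unit vectors with $\langle\xi,\xi'\rangle=c_0:=\langle\xi_1,\xi_2\rangle$. Using transitivity of Popa's intertwining relation (valid in general, and in any case here since all the $A(\xi)$ are MASAs), it follows that $A(\xi)\prec_N A(\xi')$ whenever $\xi$ and $\xi'$ are joined by a finite chain of unit vectors with consecutive inner products equal to $c_0$. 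We may assume $\dim H\geq 3$ (replace $H$ by $H\oplus\R$, i.e. $N$ by $N\ast\{s(\zeta')\}''$, which only enlarges the ambient algebra and hence preserves $\prec_N$). A short spherical-geometry argument then shows that, as soon as $c_0\neq\pm1$, from $\xi_1$ one reaches in finitely many such steps a unit vector $\zeta$ with $\langle\xi_1,\zeta\rangle=0$, whence $A(\xi_1)\prec_N A(\zeta)$. But this is impossible: writing $N=A(\zeta)\ast N_\zeta$ for the free Gaussian algebra $N_\zeta$ of $\zeta^{\perp}$ (as in the discussion preceding the lemma), $A(\xi_1)$ is a diffuse subalgebra of the free complement $N_\zeta$ of $A(\zeta)$, and a diffuse subalgebra of a free complement never intertwines into the complementary free factor (the inclusion $A(\zeta)\subseteq N$ being mixing by \cite{P82}-type considerations). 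Therefore $c_0=\pm1$, i.e. $\xi_1=\pm\xi_2$.

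The main obstacle is implication (ii), and within it two points deserve care: the combinatorial claim that, on the unit sphere of a space of dimension $\geq 3$, compositions of rotations through a fixed angle $\theta_0\in(0,\pi)$ realize moves through every angle in $[0,\pi]$ — in particular a right angle — the two-dimensional case being genuinely rigid; and the bookkeeping that Popa intertwining is transitive here and that free diffuse subalgebras of a free product cannot be intertwined into one another. An alternative, more computational route to (ii) avoids the dimension reduction: fix $\dim H=2$, write $N\cong A(\xi_2)\ast A(\eta)$ with $\xi_1=c_0\xi_2+\sqrt{1-c_0^{2}}\,\eta$, and check directly — using the non-crossing-partition moment formula for a semicircular pair — that a weakly null sequence of Haar unitaries in $A(\xi_1)$ witnesses $A(\xi_1)\nprec_N A(\xi_2)$ whenever $c_0^{2}<1$; the heart of this variant is the same ``transversality'' of the free semicircular summand $\sqrt{1-c_0^{2}}\,s(\eta)$ relative to $A(\xi_2)$.
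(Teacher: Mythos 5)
Your overall strategy coincides with the paper's: use $\cO(H)$-equivariance of the free Gaussian functor to propagate the relation $A(\xi)\prec_N A(\xi')$ among pairs of unit vectors, run a spherical-geometry/angle-doubling argument (after enlarging $H$ so that $\dim H\geq 3$) to reach an orthogonal pair, and then get a contradiction from \cite{P81} since orthogonal vectors give freely positioned (hence non-intertwinable) diffuse abelian subalgebras. The forward implications and your Fock-space verification of (i) are fine, if more than is needed.

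The genuine gap is the step where you compose intertwining relations. You assert that Popa's relation $\prec$ is transitive, ``valid in general, and in any case here since all the $A(\xi)$ are MASAs.'' It is \emph{not} valid in general, and the MASA hypothesis does not repair it: from $A\prec_N B$ and $B\prec_N C$ one gets partial isometries $v$ (with $vv^*\in B$) and $w$ (with $w^*w\in B$) implementing conjugacy of \emph{corners}, and nothing forces the projections $vv^*$ and $w^*w$ to have non-zero product in the diffuse abelian algebra $B$; if they are disjoint, the naive composition $wv$ vanishes and no intertwiner between $A$ and $C$ is produced. Since your entire chain argument rests on iterating such compositions, the proof as written does not go through. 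The paper circumvents exactly this difficulty with a ``glueing'' trick (in the spirit of \cite{P03}): given one intertwiner $v$ with $v^*v\in A(\xi)$, $vv^*\in A(\eta)$ and $vA(\xi)v^*=A(\eta)vv^*$, and an orthogonal $\theta$ \emph{fixing $\xi$} with $\theta(\eta)=\eta'$, the induced automorphism $\alpha$ fixes $A(\xi)$ pointwise, so $w=\alpha(v)v^*$ has right support $w^*w=vv^*\neq 0$ (because $\alpha(v)^*\alpha(v)=\alpha(v^*v)=v^*v$, i.e., the two supports match exactly) and intertwines $A(\eta)$ into $A(\eta')$. This produces, from one pair in the ``bad'' set $\cS$, a new pair $(\eta,\eta')\in\cS$ with any prescribed position of $\eta'$ on the cone of fixed angle around $\xi$, which is what allows the angle to be doubled until orthogonality is reached. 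Your sketched ``alternative computational route'' via moment formulas is only an outline and does not by itself close the gap.
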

\begin{proof} It is clearly sufficient  to show that if  $\xi_1,\xi_2 \in H$ are unit vectors with $\xi_1\neq \pm \xi_2$, then $A(\xi_1) \nprec_N A(\xi_2)$, as all other implications are trivial. 
Note that in proving this, we may assume $|J|\geq 3$. Indeed, because if $A(\xi_1) \nprec_{\tilde{N}} A(\xi_2)$ 
in the larger factor $L\Bbb F_{|J|+1} = \tilde{N}\supset N$, then $A(\xi_1) \nprec_N A(\xi_2)$ as well. 

Consider the set $\cS$ of pairs of unit vectors $\xi_1,\xi_2 \in H$ such that $A(\xi_1) \prec_M A(\xi_2)$. Assume by contradiction that this set contains a pair of non-proportional vectors. We will deduce that $\cS$ must also contain a pair of orthogonal vectors. But this conclusion is absurd because if $\xi_1 \perp \xi_2$ then $A(\xi_1), A(\xi_2)$ are in free position in $N$, so $A(\xi_1)\not\prec_N A(\xi_2)$ by \cite{Po81}.

By functoriality, $\cS$ is invariant under orthogonal transformations: if $(\xi,\eta) \in \cS$ and if $\theta \in \cO(H)$ is an orthogonal transformation, then $(\theta(\xi),\theta(\eta))\in \cS$.

{\bf Claim.} $\cS$ also satisfies the following property: given unit vectors $\xi, \eta, \eta'\in H$ with $\langle \xi, \eta \rangle=\langle \xi , \eta'\rangle$, if $(\xi,\eta) \in \cS$, then $(\eta,\eta') \in \cS)$.

Indeed, if $(\xi,\eta) \in \cS$, then by (Theorem A.1 in \cite{P01}), since $A(\xi)$ and $A(\eta)$ are MASAs in $N$, there exist a non-zero partial isometry $v\in N$ such that $v^*v\in A(\xi)$, $vv^*\in A(\eta)$ and $vA(\xi)v^*=A(\eta)vv^*$. Moreover, by our assumption, there exists $\theta \in \cO(H)$ such that $\theta(\xi) = \xi$ while, $\theta(\eta) = \eta'$. The automorphism $\alpha \in \Aut(N)$ associated with $\theta$ leaves all elements in $A(\xi)$ fixed and satisfies $\theta(A(\eta))=A(\eta')$. Thus
\[\theta(v)A(\xi) \theta(v)^*=\theta(vA(\xi)v^*)=\theta(A(\eta)vv^*)=A(\eta')\theta(vv^*), \]
implying that $w=\theta(v)v^*\in N$ is a partial isometry with right support $w^*w=vv^*\in A(\eta)$, left support $ww^*=\theta(vv^*) \in A(\eta')$ and satisfying $wA(\eta)w^*=A(\eta')ww^*$. So $(\eta,\eta') \in \cS$.

Starting with $(\xi,\eta) \in \cS$, $\xi \neq \pm\eta$, we may find a small rotation $\theta$ fixing $\xi$, such that the angle between $\eta$ and $\eta' := \theta(\eta)$ is of the form $2^{-k}$ for some large enough $k$. So by the claim, we have found a pair of vectors $(\eta,\eta') \in \cS$ forming an angle of the form $2^{-k}$, for some $k \in \N$.

Applying again the claim, we see that if $(\xi,\eta) \in \cS$ is a pair of vectors forming an angle $\alpha$ then there exists $\eta'$ such that $(\eta,\eta') \in \cS$ and which forms an angle $2\alpha$ with $\eta$. So by iterating this procedure, we may reach a pair $(\xi,\eta) \in \cS$ with $\xi \perp \eta$, giving our contradiction.
\end{proof}

With the above notations, we have $A(t) \subset N \subset M$ with the properties stated in the theorem being satisfied in $N$: 
the abelian von Neumann algebras $A(t), t\in \ell^2_*$, are maximal amenable in $N$ and the space of intertwiners in $N$ between any two of them  
is equal to $0$, unless the corresponding $t\in \ell^2_*$ are proportional. 

Thus, in order to prove the theorem, all we need is to ``lift'' these properties 
from $N$ to $M$. To prove that $A(t)$'s are non-intertwinable in $M$ as well, we'll need the absorption result  from \cite{IPP05}.

\begin{lem}\label{going up}
Let $B \subset P_1, B\subset P_2$ be inclusions of tracial von Neumann algebras, with ${\tau_{P_1}}_{|B}={\tau_{P_2}}_{|B}$, 
and let $P = P_1 \ast_B P_2$  denote their amalgamated free product. Let  
$A_1,A_2 \subset P_1$ be von Neumann subalgebras and assume  $A_1 \nprec_{P_1} B$. Then $A_1 \nprec_{P} A_2$ iff $A_1 \nprec_{P_1} A_2$.
\end{lem}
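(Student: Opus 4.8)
The plan is to prove Lemma \ref{going up} by combining Popa's intertwining criterion with the AFP absorption theorem from \cite[Theorem 1.1]{AJS05}, which (for subalgebras $A \subset P_1$ with $A \nprec_{P_1} B$) identifies the quasi-normalizer, or more directly controls how $A$ sits inside $P = P_1 *_B P_2$. The ``only if'' direction is trivial: since $P_1 \subset P$, any sequence of unit vectors in $A_1$ witnessing $A_1 \nprec_{P_1} A_2$ also witnesses $A_1 \nprec_P A_2$ (the relevant expectation $E_{A_2}$ is the restriction to $P_1$ of the one on $P$, because $A_2 \subset P_1$ and $E^P_{P_1}$ is trace-preserving). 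So the content is the ``if'' direction: assuming $A_1 \nprec_{P_1} B$ and $A_1 \nprec_{P_1} A_2$, deduce $A_1 \nprec_P A_2$.

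First I would argue by contradiction: suppose $A_1 \prec_P A_2$. By Popa's criterion there is a nonzero partial isometry $v \in P$ with $v^* A_1 v \subseteq A_2 v^* v$ (equivalently $v x = \Phi(x) v$ for a normal $*$-homomorphism $\Phi \colon A_1 \to A_2$, after passing to a corner). The key step is then to upgrade the hypothesis $A_1 \nprec_{P_1} B$ into a statement inside $P$: by the absorption result of \cite{AJS05} — the same ``staying away from $B$ forces staying inside $P_1$'' phenomenon recalled in the introduction of this paper — any element of $P$ that intertwines $A_1$ into $P_1$ must itself lie in $P_1$. Concretely, \cite[Theorem 1.1]{AJS05} gives that the quasi-normalizer (or the set of intertwiners) of $A_1$ in $P$ is contained in $P_1$, precisely because $A_1 \nprec_{P_1} B$. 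Since $v^* A_1 v \subseteq A_2 v^* v \subseteq P_1$, the partial isometry $v$ intertwines $A_1$ into $P_1$, hence $v \in P_1$ by this absorption statement. But then $v$ is a nonzero partial isometry in $P_1$ realizing $A_1 \prec_{P_1} A_2$, contradicting the hypothesis $A_1 \nprec_{P_1} A_2$.

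A mild technical point I would handle carefully is the passage from ``$v^* v \in A_1$, $v v^* \in A_2$, $v A_1 v^* = A_2 v v^*$'' (the form in which intertwining is sometimes stated) to simply ``$v x = \Phi(x) v$'': in the generality here $\Phi$ need not be onto, so I would use the weakest form of Popa's criterion, namely the existence of a nonzero $v$ with $A_1 v \subseteq \sum_{\text{finite}} v_i A_2$ type containment, or equivalently a nonzero $A_1$-$A_2$ bimodule with finite right dimension inside $L^2(P)$; all that matters is that such a $v$ lies in the space of $(A_1,P_1)$-intertwiners, to which \cite{AJS05} applies. I would also note explicitly that the hypotheses of \cite[Theorem 1.1]{AJS05} require only $A_1 \nprec_{P_1} B$ (no condition on $A_2$), so nothing extra is needed.

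The main obstacle is matching the precise form of the AJS05 absorption theorem to what is needed: \cite{AJS05} is typically phrased for the quasi-normalizer of a subalgebra $Q \subset P_1$ with $Q \nprec_{P_1} B$, asserting it stays in $P_1$, and one must check this is exactly the statement ``every $(A_1,P_1)$-intertwiner in $P$ lies in $P_1$'' — i.e., that a partial isometry $v$ with $v^* A_1 v \subseteq P_1$ is captured by their hypothesis. This is standard (it is the mechanism behind all the $L\mathbb{F}_n$ in-decomposability results cited), but it is the one place where I would quote the reference carefully rather than reprove it; everything else is a short contradiction argument.
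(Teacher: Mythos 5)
Your proof is correct and follows essentially the same route as the paper's: both reduce to the observation that, since $A_2 \subseteq P_1$, any $A_1$--$A_2$ intertwiner witnessing $A_1 \prec_P A_2$ is in particular an $A_1$--$P_1$ intertwiner and hence lies in $P_1$ by the IPP absorption theorem (the paper phrases this with a vector $\xi \in L^2P$ generating a bimodule finite over $P_1^{\mathrm{op}}$ rather than with a partial isometry, which is exactly the ``weakest form'' of the intertwining criterion you flag as the right one to use). One small slip: your justification of the easy direction is stated backwards --- a sequence witnessing $A_1 \nprec_{P_1} A_2$ need \emph{not} witness $A_1 \nprec_{P} A_2$ (that is precisely the hard direction); the easy direction holds simply because an intertwiner inside $P_1$ is already an intertwiner inside $P$.
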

\begin{proof} By (Theorem 1.1 in [IPP05]), any intertwiner from $A_1$ to $P_1$ inside $P$ must lie in $P_1$. That is, if $\xi\in L^2P$ is 
so that the Hilbert $A_1-P_1$ bimodule $\mathcal H:=\overline{\text{\rm sp}A_1\xi P_1}\subset L^2P$ is ``finite over $P_1^{op}$'' 
(i.e., $(P_1^{op})'\cap \mathcal B(\mathcal H)$ is a finite von Neumann algebra), then $\xi\in L^2P_1$. Since $A_2\subset P_1$, 
if $A_1\prec_{P} A_2$ then the corresponding non-zero $A_1-A_2$  intertwiner $\xi\in L^2P$ must lie in $P_1$, showing that $A_1\prec_{P_1} A_2$. 
The converse is of course trivial.  
\end{proof}

\begin{proof}[Proof of Theorem \ref{MAMS}] We may assume $J$ is at most countable, with $|J|\geq 2$, i.e., either $J=\{1, 2, ..., n\}$ for some 
finite $n\geq 2$, or $J=\{1, 2, ... \}$. We already know that $A(\xi), A(\eta)$ are non-intertwinable in $N$ for any $\xi,\eta \in H$ that are not proportional (this includes the case $\eta = s_j$, for some $j \in J$). 

Note that for any inclusion of tracial von Neumann 
algebras $B\subset P$ and $Q$ another tracial von Neumann algebra, we have an obvious identification 
\[(B*Q \subset P*Q) \simeq (B*Q \subset P *_B (B*Q)).\] 

We'll use this for the inclusions $N_i\subset N_{i+1}$, $i\geq 0$, 
where $N_0=N=A(s_1)*A(s_2)* ...$, $N_1=M_1*A(s_2)*A(s_3)*...$, and more generally 
$N_{i}=M_1* ... *M_i*A(s_{i+1}) * ... * A(s_n)$, $\forall i\geq 1$, to write each such inclusion 
as $(N_i\subset N_{i+1})=(N_i \subset N_i*_{A(s_{i+1})} M_{i+1})$. 

We prove by induction on $i$ that $A(t)$ is maximal amenable inside $N_i$ and $A(t) \nprec_{N_i} A(s_i)$ and $A(t) \nprec_{N_i} A(t')$, for all $i \geq 0$, and all $t,t' \in \ell^2_*$ non-proportional.

We already proved the case $i = 0$ 

Let now $i \geq 0$ and assume that the result hold for $i$.
So by Lemma \ref{going up}, this implies $A(t) \nprec_{N_{i+1}} A(t')$, $A(t) \nprec_{N_{i+1}} A(s_j)$, $\forall j$. 
By using this fact for $j=i+1$ we also obtain from \cite{BH16} that $A(t)$ is maximal amenable in $N_{i+1}$.  
 
If $|J|<\infty$, the proof is complete. Otherwise, we need to derive the conclusion in $M$. Since $N_n \nearrow M$, if $A(t), A(t')$ would be interwtinable in $M$, via some non-zero $\xi\in L^2M$, then its expectation onto $N_n$ for some large $n$ is non-zero 
and intertwines $A(t), A(t')$ in $N_n$, a contradiction. This implies $A(t)\not\prec_M A(t')$, whenever $t\in \ell^2_*$, $t'\in H$ are not 
proportional (so including the case $s(t')=s_j, j\in J$). 
About the maximal amenability of $A(t)$ in case where $J$ is infinite, we may in fact use a gathering trick. Since $|J|\geq 2$ and $t$ has at least two non-zero entries, we can split $J$ as a disjoint union of two non-empty sets $J=J_1 \sqcup J_2$ such 
that the support of $t$ intersects both $J_1, J_2$. Thus, we can decompose $s(t)$ as $s(t)=(\sum_{j\in J_1} t_js_j) + (\sum_{j\in J_2} t_js_j)=c_1 \tilde{s}_1 + c_2\tilde{s}_2$, 
with $c_1, c_2\in \mathbb R$ non-zero and $\tilde{s}_i\in \tilde{M}_i:=*_{j\in J_i}M_j$, $i=1,2$, self-adjoint semicircular elements. Thus, 
we can apply the case $|J|=2$ above to deduce that  the von Neumann algebra generated by $s(t)$ in $\tilde{M}_1*\tilde{M}_2$, i.e.,  
$A(t)\subset M$, is maximal amenable. 
\end{proof}

\begin{rem} 
 $1^\circ$ While we could not use Theorem \ref{MAMS} to prove that the free group factors 
$L\mathbb F_n$ contain maximal amenable MASAs that are not freely complemented, we believe the following particular cases 
of \ref{MAMS} deserve further investigation: $(a)$ the case $M_j = A(s_j) \otimes R$, $\forall j\in J$, where $A(s_j)=\{s_j\}''\subset M_j$ and $R$ is the hyperfinite 
II$_1$ factor; $(b)$ the case $M_j=A(s_j)\rtimes \Gamma_j$, where $\Gamma_j$ is an amenable group 
and $\Gamma_j \curvearrowright A_j$ is a trace preserving action, $\forall j\in J$;  $(c)$ 
the case $M_j$ abelian non-separable, e.g., an ultrapower of $L^\infty[0,1]$, $\forall j$. 

When studying these cases, it may be useful to take into account that if  $s_j\in M_j$ are fixed, 
the map $t \mapsto s(t)$ is very smooth, a fact that entails pointwise continuity of $t \mapsto A_t$. It may be possible to use 
this continuity in combination
with the fact that at all points $t\in \ell^2_*$ the algebra $A(t)$ is maximal amenable, while at the  ``singularity points $A(s_j)$'' this fails, 
once $M_j$ is ``much larger'' than the semicircular element $s_j\in M_j$.  

The case $M=A^{*n}$ in $(c)$ above, with $A$ abelian purely non-separable, deserves additional attention because of another interesting problem. 
Namely, since the separable algebras $A(t)$ are shown in \ref{MAMS} 
to be maximal amenable (so in particular MASAs) in $M$ despite $A$ being non-separable, this suggests that
the only way to obtain a purely non-separable MASA in such $M$ is to “re-pack” pieces
of $A_k, 1 \leq k \leq n$, where $A_k$ denotes the $k$'th copy of $A$ in the free product $A^{*n}$, and that consequently 
one could recover $n$ from the isomorphism class of $A^{*n}$, $2\leq n \leq \infty$, 
thus showing that these free product factors are non-isomorphic. We leave this as an open problem. 

$2^\circ$ As we mentioned before, a natural candidate for a non-freely complemented  abelian subalgebra in $L\mathbb F_n$ 
is the radial MASA $L_n$ (shown to be a MASA in \cite{Py81} and  maximal amenable in \cite{CFRW09}). 
Another natural class of candidates are the MASAs $A_g=\{u_g\}''\subset L\mathbb F_n$, where  
$g\in \mathbb F_n$  is so that $g^{\mathbb Z}$ is maximal abelian 
in $\mathbb F_n$, but not freely complemented in $\mathbb F_n$. 
A typical such example is the commutator element $g=aba^{-1}b^{-1} \in \mathbb F_2$. 
It was pointed out in (\cite{Po82}, Remark 3.5.1) that  any such abelian subalgebra $A_g\subset L(\mathbb F_n)$  is maximal amenable, so in particular a MASA.  
But it remains open whether such MASAs are 
freely complemented in $L(\mathbb F_n)$ or not. 

\end{rem}

\end{document}